\newcommand{\beqnum}{\begin{equation}\begin{array}{lcl}}
\newcommand{\eeqnum}{\end{array}\end{equation}}
\newcommand{\beqnom}{\begin{eqnarray}}
\newcommand{\eeqnom}{\end{eqnarray}}
\newcommand{\beqnc}{\begin{center}\begin{eqnarray}}
\newcommand{\eeqnc}{\end{eqnarray}\end{center}}
\newcommand{\beqnlm}{\begin{equation}\vspace{-.5cm}\begin{array}{lll}}
\newcommand{\eeqnlm}{\end{array}\end{equation}}\vspace{-.5cm}
\newcommand{\beq}{\begin{eqnarray*}}
\newcommand{\eeq}{\end{eqnarray*}}
\newcommand{\bef}{\begin{figure}[tbh!]}
\newcommand{\enf}{\end{figure}}
\newtheorem{defn}{\bf Definition}
\newtheorem{montheo}{\bf Theorem}
\newtheorem{rem}{\bf Remark}
\newtheorem{lemme}{\bf Lemma}
\newtheorem{Proposition}{\bf Proposition}
\title{Stabilization of perturbed integrator chains using Lyapunov-Based Homogeneous Controllers}
\author{Salah Laghrouche, Mohamed Harmouche,\\ and Yacine Chitour
\thanks{S. Laghrouche and M. Harmouche are with IRTES-SET Laboratory, UTBM, Belfort, France. (e-mail: \{salah.laghrouche,mohamed.harmouche\}@utbm.fr).}
\thanks{Y. Chitour is with L2S, Universite Paris XI, CNRS 91192 Gif-sur-Yvette, France. (e-mail:yacine.chitour@lss.supelec.fr)}}
\begin{document}
\maketitle
\thispagestyle{empty}
\pagestyle{empty}
\maketitle

\begin{abstract}
\noindent In this paper, we present a Lyapunov-based homogeneous controller for the stabilization of a perturbed chain of integrators of arbitrary order $r\geq 1$. The proposed controller is based on homogeneous controller for stabilization of pure integrator chains. The control of homogeneity degree is also introduced and various controllers are designed using this concept, namely a bounded-controller with minimum amplitude of discontinuous control and a controller with fixed-time convergence. The performance of the controller is validated through simulations.
\end{abstract}



%
\section{Introduction}

The problem of finite-time stabilization of a perturbed integrator chain arises in many control applications. For example, electromechanical systems such as motorized actuators or robotic arms are modeled as perturbed double integrators \cite{Baht1998,Hong2002_Robot,Orlov2009}. Another application is in Higher Order Sliding Mode Control (HOSM) \cite{Emelyanove}, which can be formulated as the stabilization of an auxiliary system arising as a perturbed integrator chain built from the output and its higher time derivatives \cite{Dinuzzo09}. The finite-time stability problem was addressed in relation with homogeneous systems in \cite{Bhat_Bernstein_1}, and homogeneity concept was used for stabilization of linear systems in \cite{Praly_1997}. In \cite{Bernstein2005}, the link between finite-time stabilization and homogeneity of a system was established, and it was shown that a homogeneous system is finite-time stable if and only if it is asymptotically stable and has a negative homogeneity degree. This result has, since then, been used for the development of many controllers for pure and perturbed integrator chains. A homogeneous nonsmooth proportional-derivative controller for robot manipulators (double integrator system) was developed in \cite{Hong2002_Robot}. This work was generalized for an arbitrary-length integrator chain in \cite{Hong}. In \cite{Qian_1} and \cite{Qian_2}, negative homogeneity was used for the finite-time stabilization of a class of nonlinear systems that includes perturbations at each integrator link.

Among Sliding Mode techniques, the homogeneity approach was used in \cite{Levant2003,Levant2005}, to demonstrate finite-time stabilization of the arbitrary order sliding mode controllers for Single Input Single Output (SISO) systems \cite{Levant2001}. A robust Multi Input Multi Output (MIMO) HOSM controller was also presented in \cite{Defoort2009}, using a constructive algorithm with geometric homogeneity based finite-time stabilization of an integrator chain. A controller, which stabilizes a perturbed integrator chain of arbitrary length using only the signs of state variables, was presented in \cite{Kryachkov}. A Lyapunov-based approach for arbitrary HOSMC controller design was first presented in \cite{Harmouche_CDC12}. In these works, it was shown that a class of homogeneous controllers that satisfies certain conditions, could be used to stabilize perturbed integrator chains.

In this paper, we present a continuation of \cite{Harmouche_CDC12}, and develop a Lyapunov-based robust controller for the finite-time stabilization of a perturbed integrator chain of arbitrary order, with bounded uncertainty. The main focus of this paper is to obtain various properties in the controller by changing the degree of homogeneity. The homogeneous controller for perturbed integrator chains is developed from a discontinuous Lyapunov-based controller for pure integrator chains. It is then demonstrated that the homogeneity degree can be controlled in the neighborhood of zero, such that the amplitude of discontinuous control is kept to its minimum possible value when the states have converged. It is also shown that the recently developed ``Fixed-Time" stability notion can be achieved by changing the homogeneity degree. Fixed-time stability  was first introduced in \cite{Andrieu2008}; this term refers to the finite-time stabilization of systems with uniform convergence, i.e. the convergence time is bounded and independent of the system's initial state. In \cite{Zavala_2011} and \cite{Zavala_2012}, uniform convergence to a neighborhood of the origin was demonstrated for second order systems and arbitrary order respectively. In \cite{Polyakov}, fixed-time convergence controllers were developed for linear systems, insuring guaranteed convergence exactly to zero. Based on the control of homogeneity degree, the controller presented in this paper ensures fixed-time convergence to zero of a perturbed chain of integrators.

The paper is organized as follows: the problem formulation as well as the motivation and contributions of the paper are discussed in Section 2. The controller design is presented in Section 3 and its special cases are demonstrated in Section 4. Simulation results are shown in Section 5 and concluding remarks are given in Section 6.

\section{Problem Formulation, motivation and contribution}
The mathematical formulation of the perturbed integrator chain problem is developed first. Then, the motivation behind using homogeneity based controllers and the contribution of this paper are presented.

\subsection{Problem Formulation}
\noindent Let us consider an uncertain nonlinear system:
\beqnum\label{nonlinear}
\left\{
\begin{array}{l}
 \dot x(t) = f(x,t) + g(x,t)u ,\\
 y(t)= s(x,t),
 \end{array}
\right.
\eeqnum
\noindent where $x \in {\mathbb{R}^n}$ is the state vector and $u \in \mathbb{R}$ is the input control. The sliding variable $s$ is a measured smooth output-feedback function and $f(x,t)$ and $g(x,t)$ are uncertain smooth functions. It is assumed that the relative degree, $r$ of the system \cite{Isidori} is globally well defined, uniform and time invariant \cite{Dinuzzo09} and the associated zero dynamics are asymptotically stable. For autonomous systems, $r$ is the minimum order of time derivatives of the output $y(t)$ in which the control input $u$ appears explicitly. This means that, for suitable functions $\tilde \varphi(x,t)$ and $\tilde \gamma(x,t)$, we obtain 

\beqnum\label{nonlinear1}
y^{(r)}(t)=\tilde \varphi(x(t),t)+\tilde \gamma(x(t),t) u(t).

\eeqnum
The functions $\tilde \gamma (x,t)$ and $\tilde \varphi (x,t)$ are assumed to be bounded by positive constants  $\gamma_m \leq\gamma_M$ and $\bar \varphi$, such that, for every $x \in {\mathbb{R}^n}$ and $t\geq 0$,

\beqnum\label{bound}
0 < \gamma_m\le \tilde\gamma(x,t) \le \gamma_M, \quad \left|  \tilde\varphi(x,t) \right| \le \bar \varphi.

\eeqnum
Defining $s^{(i)}:= \frac{d^i}{dt^i}y$; the goal of $r^{th}$ order sliding mode control is to arrive at, and keep the following manifold in finite-time:

\beqnum\label{manifold}
s^{(0)}(x,t)=s^{(1)}(x,t)=\cdots=s^{(r-1)}(x,t)=0.

\eeqnum
To be more precise, let us introduce $z=[z_1\  z_2\ ... z_r ]^T:=[s\ \dot s ... \ s^{(r-1)} ]^T$. Then \eqref{manifold} is equivalent to $z=0$. Since the only available information on ${\tilde \varphi(x,t)}$ and ${\tilde \gamma(x,t)}$ are the bounds \eqref{bound}, it is natural to consider a more general control system instead of System \eqref{nonlinear1}, such as

\beqnum\label{u.l.s.}
	\dot z_{i} = z_{i+1}, i=1,\cdots,r-1, \quad \dot z_r = \varphi(t) + \gamma(t) u,

\eeqnum
where the new functions $\varphi$ and $\gamma$ are arbitrary measurable functions that verify the condition

\beqnum\label{H1}
(H1)\quad \varphi(t) \in \left[-\bar\varphi , \bar\varphi \right], \quad \gamma(t) \in \left[\gamma_m , \gamma_M \right].

\eeqnum
The objective of this paper is to design controllers which stabilize System \eqref{u.l.s.} to the origin in finite-time . Since these controllers are to be discontinuous feedback laws $u=U(z)$, solutions of  \eqref{u.l.s.} will fall in the realm of  differential inclusions and need to be understood here in Filippov sense, i.e. the right hand vector set is enlarged at the discontinuity points of \eqref{u.l.s.} to the convex hull of the set of velocity vectors obtained by approaching $z$ from all the directions in $\mathbb{R}^r$, while avoiding zero-measure sets \cite{Filippov}.

\subsection{Motivation behind Homogeneity based control}
The main motivation behind this paper is to develop a Lyapunov-based universal homogeneous controller for an arbitrary order perturbed integrator chain represented by System \eqref{u.l.s.}. The insistence upon homogeneity based control is due to the fact that varying the controller's homogeneity degree produces different interesting results.

\begin{defn}
The sign function is a  multi-valued function defined on $\mathbb{R}$ by $sign(z)=z/\left| z \right|$ if $z\neq 0$ and $sign(0)=[-1,1]$. Moreover, if  $\alpha \ge 0 $ and $a \in \mathbb{R}$, we use 
 $\left\lfloor a \right\rceil ^\alpha$ to denote $\left| a \right| ^\alpha sign(a)$. Then, for $a,b\in \mathbb{R}$ and $\alpha >0$, it holds 
$\text{sign}(\left\lfloor a \right\rceil^{\alpha} - \left\lfloor b \right\rceil^{\alpha})  = \text{sign}(a-b)$.
\end{defn}
%

Let us present some observations that were made in \cite{Andrieu2008}. Considering the following one-dimensional differential equation

\beqnum\label{eq:diff}
	\dot z = \omega(z) = - c \left\lfloor z \right\rceil^\alpha.
\eeqnum
where $\alpha \ge 0 $, $c>0$.  The degree of homogeneity of Equation \eqref{eq:diff} is $\kappa = \alpha -1$ and this system is stable for all $c>0$ and $ \alpha \ge 0$. However, different characteristics can be obtained in the system, depending upon the value of $\alpha$:
\begin{itemize}
	\item $\alpha = 0$: the convergence to zero occurs in finite-time. The controller $\omega(z)$ is uniformly bounded for $z \in \mathbb{R} $  but discontinuous at $z=0$;
	\item $0 < \alpha < 1$: the convergence to zero occurs in finite-time. The controller $\omega(z)$ is unbounded and tends to zero as $|z| \rightarrow 0$;
	\item $\alpha > 1$: the convergence to zero is asymptotic, however the convergence time to the sphere $\mathbf{B}(0,1)=\{z \in \mathbb{R}:\|z\| < 1\}$ is uniformly bounded by a constant. The controller $\omega(z)$ is unbounded.
	the set $\mathbf{B}(0,\epsilon) = \{z \in {\hat U}:V(z) < \varepsilon\}$ is fixed-time attractive.
\end{itemize}
Let us now consider a perturbed integrator, i.e. \eqref{eq:diff} is replaced by the following differential inclusion:

\beqnum \label{in:diff}
	\dot z \in  \left[-\bar \varphi , +\bar \varphi \right] + u(z) \left[\gamma_m , \gamma_M \right],
\eeqnum
where $\gamma_m \leq\gamma_M$ and $\bar \varphi$ are arbitrary positive constants. The observations mentioned above can be extended to System \eqref{in:diff} by applying a control of the form $u=\frac{1}{\gamma_m}\left ( \omega(z) + \bar \varphi sign( \omega(z))  \right )$ \cite{Harmouche_CDC12}. In addition, manipulation of homogeneity degree $\kappa$ leads us to controllers with the following properties:
\begin{itemize}
	\item fixed-time unbounded controller: switch from $\kappa_1>0$ to $\kappa_2<0$ when $z$ reaches the sphere $\mathbf{B}(0,1)$.
	\item uniformly bounded controller, with reduced amplitude of discontinuous control as $z$ converges to zero: switch from $\kappa_1 = -1 $ to $-1 <\kappa_2< 0$.
\end{itemize}

\subsection{Contribution}
While these observations are well-known, they have so far not been exploited in control algorithms, to the best of our knowledge. Therefore, it is interesting to develop and study algorithms in which the homogeneity degree is varied according to the system's state. 
In this work, we extend the above observations related to homogeneous controllers for a single integrator to the stabilization of perturbed integrator chains of arbitrary order $r\geq 1$, based on a controller which stabilizes pure integrator chains. It is shown that, for particular choice of homogeneity degree, a bounded Lyapunov-based arbitrary order controller can be designed, which is similar in structure to Levant's well-known homogeneous controller \cite{Levant2001}. The existence of the Lyapunov function provides the added advantage of analytical tuning of controller parameters. It is also demonstrated that a bounded controller is synthesized using a change of homogeneity degree, such that the controller has a reduced amplitude at $z=0$. Then a fixed-time controller is obtained, also by controlling the homogeneity degree.

\section{Controller design}
We will now develop the controller in two steps. The stabilization of a pure integrator chain will be considered first. Then the study is extended to the case of a perturbed integrator chain.

\subsection{Useful definitions, lemmas and theorems}
We need the following definitions to state our results. Consider the differential system
%

\begin{equation}\label{test}
\dot z =f(t,z), \,\, z\in\mathbb{R}^r.
\end{equation}

\begin{defn}\cite{Polyakov,Bernstein2000}
The equilibrium point $z = 0$ of System \eqref{test} is said to be locally \textbf{finite-time} stable in a neighborhood ${{\hat U}} \subset {\mathbb{R}^r}$ if (i) it is asymptotically stable in ${\hat U}$; (ii) it is finite-time convergent in ${\hat U}$, i.e. for any initial condition $z_0$, $z(t, z_0) = 0 , \forall t \ge T(z_0)$, where $T(z_0)$ is called the settling-time function. The equilibrium point $z = 0$ is globally finite-time stable if ${{\hat U}}= {\mathbb{R}^r}$. The equilibrium point is \textbf{fixed-time} stable if (i) it is globally finite-time stable; (ii) the settling-time function is bounded by a constant $T_{max}$, i.e. $\exists T_{max}>0: \forall z_0 \in {\mathbb{R}^r}, T(z_0) \le T_{max}$.
\end{defn}
\begin{defn}\cite{Polyakov}
The set $S$ is said to be globally \textbf{finite-time} attractive for \eqref{test}, if for any initial condition $z_0$, the trajectory $z(t,z_0)$ of \eqref{test}, achieves $S$ in finite-time $T(z_0)$.
Moreover, the set $S$ is said to be \textbf{fixed-time} attractive for \eqref{test}, if (i) it is globally finite-time stable; (ii) the settling-time function is bounded by a constant $T_{max}$.
\end{defn}

Let us recall the following theorem.

\begin{montheo}\label{cond:bernstein}
\cite{Bernstein2005,Bernstein2000} Suppose there exists a positive definite $C^1$ function $V$ defined on a neighborhood ${{\hat U}} \subset {\mathbb{R}^r}$ of the equilibrium point $z=0$ and real numbers $C>0$ and $\alpha \ge 0,$ such that the following condition is true for every trajectory $z$ of System \eqref{test},

\beqnum\label{eq:bernstein}
	 \dot{V} + C{V}^\alpha(z(t))\leqslant 0, \hbox{ if }z(t) \in {\hat U},
\eeqnum
where $\dot{V}$ is the time derivative of $V(z(t))$. (Here for $\alpha = 0$, Equation \eqref{eq:bernstein} means $\dot V \le -C$ if $z(t) \in \hat U \setminus \{0\}$.) Then all trajectories of System \eqref{test} which stay in ${{\hat U}}$ converge to zero.
If ${{\hat U}} = {\mathbb{R}^r}$ and $V$ is radially unbounded, then System \eqref{test} is globally stable with respect to the equilibrium point $z=0$.\\
Depending on the value $\alpha$, we have different types of convergence: if $0\le \alpha < 1$,  the equilibrium point $z=0$ is finite-time stable (\cite{Bernstein2000}), if $\alpha = 1$, it is exponentially stable
and if $\alpha > 1$ the equilibrium point $z=0$ is asymptotically stable equilibrium and, for every $\epsilon>0$, the set $\mathbf{B}(0,\epsilon) = \{z \in {\hat U}:V(z) < \varepsilon\}$ is fixed-time attractive.
\end{montheo}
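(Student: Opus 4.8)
The plan is to establish each of the three convergence regimes by a direct comparison argument applied to the scalar differential inequality \eqref{eq:bernstein}, exploiting the fact that $V(z(t))$ is a nonnegative function whose derivative is dominated by $-C V^\alpha$. First I would observe that along any trajectory staying in $\hat U$, the quantity $v(t):=V(z(t))$ is locally absolutely continuous and satisfies $\dot v \le -C v^\alpha$ wherever $v$ is differentiable. By the standard comparison principle \cite{Khalil}, $v(t)$ is bounded above by the solution $w(t)$ of the scalar equation $\dot w = -C w^\alpha$ with $w(0)=v(0)$, so it suffices to analyze this one-dimensional ODE explicitly and transfer the conclusions back to $V$, and hence to $z$ via positive-definiteness and (in the global case) radial unboundedness of $V$.

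Next I would integrate $\dot w = -C w^\alpha$ by separation of variables in each regime. For $\alpha=1$ one gets $w(t)=v(0)e^{-Ct}$, giving exponential stability. For $0\le\alpha<1$, integrating $w^{-\alpha}\,dw=-C\,dt$ yields $w(t)^{1-\alpha}=v(0)^{1-\alpha}-C(1-\alpha)t$, which reaches zero at the finite time $T=\dfrac{v(0)^{1-\alpha}}{C(1-\alpha)}$; since $v\le w$ and $V$ is positive definite, this forces $z(t)=0$ for $t\ge T$, establishing finite-time stability. For $\alpha>1$ the same separation gives, for $w>0$, the relation $w(t)^{1-\alpha}=v(0)^{1-\alpha}+C(\alpha-1)t$; here the solution decays to zero only asymptotically, but the key point is that the time to enter the sublevel set $\{V<\varepsilon\}$ can be bounded \emph{independently} of the initial condition. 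Indeed, as $v(0)\to\infty$ the term $v(0)^{1-\alpha}\to 0$ because $1-\alpha<0$, so the time needed for $w$ to fall from any initial value to the level $\varepsilon$ is at most $\dfrac{\varepsilon^{1-\alpha}}{C(\alpha-1)}=:T_{\max}$, a finite constant depending only on $\varepsilon$, $C$, and $\alpha$. This is precisely the fixed-time attractivity of $\mathbf{B}(0,\varepsilon)$ in the sense of the second definition above.

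The main technical subtlety, rather than the main difficulty, lies in justifying the comparison step rigorously in the Filippov/nonsmooth setting: $V$ is only $C^1$ and the trajectories $z(t)$ need not be classical solutions, so $v(t)$ must be handled as an absolutely continuous function for which $\dot v\le -Cv^\alpha$ holds almost everywhere, and one must invoke a comparison lemma valid under these regularity assumptions (this is the content of the cited results \cite{Bernstein2005,Bernstein2000,Khalil}). A secondary care point is the behavior at the equilibrium when $\alpha=0$, where $V^\alpha\equiv 1$ and the inequality reads $\dot v\le -C$; this still gives finite-time arrival at $v=0$ in time at most $v(0)/C$, consistent with the $0\le\alpha<1$ formula in the limit. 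Finally I would note that asymptotic stability in all cases follows from $\dot V<0$ away from the origin together with the positive definiteness of $V$ (a standard Lyapunov argument), and that the global statements follow by taking $\hat U=\mathbb{R}^r$ and using radial unboundedness to rule out finite escape and to make the comparison bounds hold from arbitrary initial data.
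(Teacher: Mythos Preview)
The paper does not supply its own proof of this theorem; it is simply recalled from the cited references \cite{Bernstein2005,Bernstein2000}. Your comparison-principle argument with explicit integration of the scalar ODE $\dot w=-Cw^\alpha$ in each regime is the standard proof and is correct.
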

\begin{proof} The argument is obvious (cf.  \cite{Bernstein2000} for $0 <\alpha< 1$). Let us just note that for $\alpha > 1$ with initial condition $V(z(0)) = V_0$, an integration of $ \dot{V} + C{V}^\alpha\leq 0$ shows that every trajectory enters the neighborhood defined by $V(z) \le \varepsilon$ in a fixed time less than or equal to $\frac1{C(\alpha - 1)\varepsilon^{\alpha - 1}}$ for any initial condition.
\end{proof}

We next recall the concept of homogeneity. 
\begin{defn}\label{dil}\cite{Hong}
The family of dilations $\zeta_\epsilon^p$, $\epsilon>0$, are the linear maps defined on $\mathbb{R}^r$
given by
$$
	\zeta_\epsilon^p(z_1,\cdots,z_r) = (\epsilon^{p_1}z_1,\cdots,\epsilon^{p_r}z_r) ,	
$$
where $p=(p_1,\cdots,p_r)$ with the dilation coefficients $p_i>0$, for $i=1,\cdots,r$.
\end{defn}
\begin{defn}\cite{Zhang2013} and \cite{Hong}
A differential inclusion $\dot z \in F(z)$
is said to be a homogeneous differential inclusion of degree $\kappa \in \mathbb{R}$ with respect to the family of dilation $\zeta_\epsilon^p$ if it satisfies
\beq
	F_i(\epsilon^{p_1}z_1,\cdots,\epsilon^{p_r}z_r) = \epsilon^{p_i + \kappa} F_i(z_1,\cdots,z_r),\  i=1,\cdots,r, \  \epsilon >0.
\eeq
where $F(z): \mathbb{R}^r \rightarrow \mathbb{R}^r$, $F(z) = \left( F_1(z),\cdots,F_r(z) \right)^T$ and $\zeta_\epsilon^p$ is given in Definition \ref{dil}.

A function $\Omega(z)$ is homogeneous of degree $a > 0$ with respect to the family of dilation $\zeta_\epsilon^p$ where $a$ is a positive real number if, for every $z\in\mathbb{R}^r$ and $\epsilon>0$, $\Omega(\epsilon^{p_1}z_1,\cdots,\epsilon^{p_r}z_r) = \epsilon^a \Omega(z_1,\cdots,z_r)$.
\end{defn}

\begin{defn}
Let $p_j$, $j=1,\cdots,i$, $\kappa$ and $c$ given as follows
\beq
	p_j = 1+ (j-1) \kappa, \quad \kappa \in [-1/r,1/r],\ c \ge \max(p_1,\cdots,p_r)>0.
\eeq
The homogeneous norm $\Gamma_{i,c}(z)$ for $z \in \mathbb{R} ^ i $ is defined by
\beq
	 \Gamma_{i,c}(z) \equiv \Gamma_{i,c}(z_1,\cdots,z_i) = \left(\sum\limits_{j=1}^{i} {\left| z_j \right|^{c/p_j} } \right)^{1 /c},
\eeq
where $\Gamma_{i,c}(\epsilon^{p_1}z_1,\cdots,\epsilon^{p_i}z_i) = \epsilon\Gamma_{i,c}(z_1,\cdots, z_i)$, $\epsilon >0 $. In this case, the unit sphere $S_{i,c}$ is given by  $S_{i,c}= \{ z \in \mathbb{R} ^ i :  \Gamma_{i,c}(z)  = 1  \}$.
\end{defn}
The following lemmas are used in the course of some subsequent arguments.
\begin{lemme}\label{lem1}
For $\alpha\geq 1$, define on $\mathbb{R}^2$ the functions $w(b,a)= \left\lfloor  b\right\rceil^{\alpha} - \left\lfloor  a \right\rceil^{\alpha}$
and $W(b,a)=\int_a^b w(s,a)ds$. Then the function $g(b,a)=\frac{W(b,a)}{\vert w(b,a)\vert^{\frac{\alpha+1}{\alpha}}}$ is continuous on 
$\mathbb{R}^2\setminus \{(0,0\}$ and homogeneous of degree zero with respect to $\zeta_\epsilon^{(1,1)}$. In particular, for every $\beta>0$, the function $g^{\beta}$ is uniformly bounded over $\mathbb{R}^2\setminus \{(0,0\}$.
\end{lemme}
\begin{proof} The only fact non trivial to establish is that $g$ is well-defined on $\mathbb{R}^2\setminus \{(0,0\}$. 
One can assume with no loss of generality that both $a$ and $b$ are positive real numbers. Set $B:=b/a$. Then $g(b,a)=g(B,1)$ and we are left to prove that $g(B,1)$ is continuous at $B=1$. It is immediate to see that the latter fact hold true by taking the Taylor's expansions of both $W(B,1)$ and $w(B,1)$ in a neighborhood of $B=1$. 
\end{proof}
%
%
%
%
%
\begin{lemme}\label{theta0}
For every $\theta>0$, positive integer $i$ and non negative real numbers $a_1,\cdots,a_i$, one has that 
$(\sum_{j=1}^i a_j)^{\theta}\leq \max(1,i^{\theta-1})\sum_{j=1}^i a_j^{\theta}$.
\end{lemme}
\begin{proof}
The result is immediate for either $i=1$ or $\theta\geq 1$, since it follows from the convexity of the function $x\mapsto x^{\theta}$ defined on 
$\mathbb{R}_+$. Assume now that $i>1$ and $\theta< 1$. Let $\Delta_i:=\{a=(a_1,\cdots,a_i)\in (\mathbb{R}_+)^i,\ \sum_{j=1}^i a_j=1\}$,  $f_i$ be the real-valued function given by $f_i(a)=\sum_{j=1}^i a_j^{\theta}$ and $C_i$ be the minimum of $f_i$ over $\Delta_i$, which is well defined since $\Delta_i$ is compact and $f_i$ is continuous. It is clear that $C_{i+1}\leq C_i\leq C_1=1$. If $C_i$ is reached at an interior point $\bar a$ of $\Delta_i$, then a trivial application Lagrange's theorem shows that all the coordinates of $\bar a $ are equal to $1/i$ implying that $C_i=i^{1-\theta}>1$, which is not possible. Then, $C_i$ is reached at a boundary point $\bar a$ of $\Delta_i$ and a trivial induction yields the result.
\end{proof}

\begin{Proposition}[\cite{Rosier1992}]\label{prop1}
Let $\Omega$ be a positive definite $C^1$ function, homogeneous of degree $a$ with respect to $\zeta_\epsilon^p$.
Then, for all $i = {1,\cdots, r}$; ${\partial \Omega}/{\partial z_i}$ is homogeneous of degree $(a - p_i)$.
\end{Proposition}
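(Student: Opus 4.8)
The plan is to differentiate the defining homogeneity identity of $\Omega$ with respect to the variable $z_i$ and read off the claimed scaling law. Writing $w=\zeta_\epsilon^p(z)=(\epsilon^{p_1}z_1,\dots,\epsilon^{p_r}z_r)$, the hypothesis that $\Omega$ is homogeneous of degree $a$ furnishes the identity
\begin{equation}\label{eq:homid}
\Omega(\epsilon^{p_1}z_1,\dots,\epsilon^{p_r}z_r)=\epsilon^a\,\Omega(z_1,\dots,z_r),
\end{equation}
valid for every $z\in\mathbb{R}^r$ and every $\epsilon>0$. Since $\Omega$ is $C^1$, both sides of \eqref{eq:homid} are differentiable in $z_i$ with $\epsilon$ held fixed, so I would apply $\partial/\partial z_i$ to each side and compare.

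First I would compute the left-hand side by the chain rule. Because the dilation acts diagonally, $w_j=\epsilon^{p_j}z_j$ depends only on $z_j$, so $\partial w_j/\partial z_i=\epsilon^{p_j}\delta_{ij}$ and only the $j=i$ term survives:
\begin{equation*}
\frac{\partial}{\partial z_i}\,\Omega(w)=\sum_{j=1}^{r}\frac{\partial\Omega}{\partial z_j}(w)\,\frac{\partial w_j}{\partial z_i}=\epsilon^{p_i}\,\frac{\partial\Omega}{\partial z_i}\big(\zeta_\epsilon^p(z)\big).
\end{equation*}
Differentiating the right-hand side of \eqref{eq:homid} is immediate and gives $\epsilon^a\,\partial\Omega/\partial z_i(z)$. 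Equating the two expressions and cancelling the nonzero factor $\epsilon^{p_i}$ yields
\begin{equation*}
\frac{\partial\Omega}{\partial z_i}\big(\zeta_\epsilon^p(z)\big)=\epsilon^{a-p_i}\,\frac{\partial\Omega}{\partial z_i}(z),
\end{equation*}
which is precisely the assertion that $\partial\Omega/\partial z_i$ is homogeneous of degree $a-p_i$.

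I do not expect a genuine analytic obstacle here: the only hypothesis actually used is that $\Omega$ is $C^1$, which legitimises the chain-rule step, while positivity and positive-definiteness of $\Omega$ play no role in the computation. The single point worth flagging is a bookkeeping issue about the degree. The derived scaling relation holds for every real value of $a-p_i$, even though the paper's definition of a homogeneous function is nominally stated only for positive degree; for the uses made of this proposition it suffices to read \emph{homogeneous of degree $a-p_i$} as the displayed scaling relation itself, with no sign restriction imposed.
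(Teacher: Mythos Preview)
Your argument is correct and is the standard chain-rule derivation. The paper does not supply its own proof of this proposition; it merely cites it as Proposition~1 of \cite{Rosier1992}, so there is nothing to compare against beyond noting that your computation is exactly the expected one.
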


\subsection{Stabilization of a pure integrator chain}
Consider the following pure integrator chain:

\beqnum\label{pure_integrators}
 \dot z_i = z_{i+1}, \ i=1,...,r-1,\quad
 \dot z_{r} = u .

\eeqnum
The following Hong's controller guarantees the stabilization of \eqref{pure_integrators}.
\begin{montheo} \cite{Hong} \label{Levant_theorem}
Let $r$ be the order of the pure integrator chain given in \eqref{pure_integrators}. For $\kappa \in [-1/r,1/r]$, set
$
	p_i= 1 + (i-1)\kappa, \quad i=1,\cdots, r.
$
Then there exist constants $l_i>0,\ i=1,\cdots,r$, independent of $\kappa$, such that the feedback control law $u = \omega_\kappa^{H}(z):= v_r$ defined inductively by:
\beqnum
 v_0 = 0,\
v_{i} = -l_{i} \lfloor\lfloor z_{i} \rceil^{\beta_{i-1} } - \lfloor v_{i-1} \rceil^{\beta_{i-1} } \rceil^{(p_i + \kappa) /( p_i\beta_{i-1})},

\eeqnum
stabilizes System \eqref{pure_integrators},
where $\beta_i$ are defined by
\beqnum\label{f_N}
	\beta_0 =p_2,&&(\beta_i + 1)p_{i+1} = \beta_0 + 1 > 0,\,\quad i=1,...,r-1.\\
\eeqnum
There also exists a homogeneous Lyapunov function $V_{\kappa,r}(z)$ for the closed-loop system \eqref{pure_integrators} with the state-feedback $u$, that satisfies $\dot V_{\kappa,r} \le -C V_{\kappa,r}^{{(2+2\kappa)}/{(2 + \kappa)}}$, for some positive constant $C$, independent of $\kappa$.
\end{montheo}
\begin{rem}
The remarkable feature of the above result lies in the explicit construction of both the controller and the Lyapunov function that we recall next. For $1\leq i\leq r$, define\beq
		w_i(z_1,\cdots,z_i) &=& \left\lfloor z_i \right\rceil ^{\beta _{i - 1}} - \left\lfloor {v_{i - 1}} \right\rceil ^{\beta _{i - 1}},\\
		W_i(z_1,\cdots,z_i) &=& \int_{{v_{i - 1}}({z_1},...,{z_{i - 1}})}^{{z_i}} w_i\left( z_1,...,z_{i - 1},s \right) ds,\\
		 &=& \frac{1}{\beta _{i - 1} + 1} \left( \left| z_i \right|^{\beta _{i - 1} + 1} -\left| v_{i - 1} \right|^{\beta _{i - 1} + 1}  \right) - \left\lfloor v_{i - 1} \right\rceil^{\beta _{i - 1}} \left( z_i - v_{i - 1} \right).
\eeq
Then the Lyapunov function $V_{\kappa,r}$ is defined by $V_{\kappa,r}(z) = \sum_{i=1}^{r}{W_i}(z_1,\cdots,z_i)$.
\end{rem}

We present next a modified version of Hong's controller denoted $\omega_\kappa^{HM}$ which also guarantees the stabilization of \eqref{pure_integrators}. 

\begin{montheo} \label{Hong_modif}
Let $r$ be the order of the pure integrator chain given in \eqref{pure_integrators}. For $\kappa \in [-1/r,0]$, set
$
	p_i= 1 + (i-1)\kappa, \quad i=1,\cdots r,
$
and let $c$ be a positive constant such that $c \ge \max(p_1,\cdots,p_r)$.
Then there exist constants $l_i>0,\ i=1,\cdots,r$, independent of $\kappa$, such that the feedback control law $u = \omega_\kappa^{HM}(z):= v_r$ defined by: 
\beqnum
 v_0 = 0,\
 v_i = -l_i N_i ,\
 			 i=1,\cdots,r,
\eeqnum
stabilizes System \eqref{pure_integrators},
where $N_i=\left\lfloor { \left\lfloor  z_i \right\rceil^{c/p_i} - \left\lfloor v_{i-1} \right\rceil ^{c/p_i} }\right\rceil ^{{(p_{i}+\kappa)}/{c}}$. There also exists a homogeneous Lyapunov function $V_{\kappa,r}(z)$ for the closed-loop system \eqref{pure_integrators} under $u$, that satisfies $\dot V_{\kappa,r} \le -C V_{\kappa,r}^{{(c+1+\kappa)}/{(c+1)}}$, for some positive constant $C$ independent of $\kappa$.
\end{montheo}
\begin{proof} The argument largely follows the lines of  \cite{Hong}. 
For $1\leq i\leq r$, we define
\beqnum\label{f_W}
	w_i(z_1,\cdots,z_i)&:=& \left\lfloor  z_i \right\rceil^{c/p_i} - \left\lfloor  v_{i-1} \right\rceil^{c/p_i} ,\\
W_i(z_1,\cdots,z_i) &=& \int_{v_{i-1}}^{z_i}{  \left\lfloor s \right\rceil^{\frac{c}{p_i}}  -   \left\lfloor v_{i-1} \right\rceil^{\frac{c}{p_i}}      ds        },\\
				&=&	\frac{ \left| z_i  \right|^{\frac{c}{p_i} + 1} - \left| v_{i-1}  \right|^{\frac{c}{p_i} + 1}}{\frac{c}{p_i} + 1} - \left\lfloor v_{i-1}\right\rceil ^{\frac{c}{p_i}} \left(  z_i - v_{i-1} \right).
\eeqnum
It can be seen that $W_i$ is positive definite function with respect to $v_{i-1}-z_i$, homogeneous with respect to $ \xi_p^\epsilon $ of degree $(c+p_i)$. We introduce $\bar W_i := W_i^{\delta_i}$, where $\delta_i = {(c+1)}/{(c+p_i)}$, so that all functions $\bar W_i$ are homogeneous of the same homogeneity degree $(c+1)$. 
We proceed to prove the theorem by induction on $r$.\\
\textit{Step 1:} Consider $\dot z_1 = u$. For any $l_1 > 0$, taking
$u = \omega_\kappa^{HM}(z_1) = -l_1 \left\lfloor  z_1 \right\rceil^{{(p_1 + \kappa)} / p_1}$ stabilizes the closed-loop system.
The Lyapunov function $V_{\kappa,1} = W_1=\left| z_1 \right|^{1 + c}/(1+c)$ is homogeneous of degree $c+1$ and
$
\dot V_{\kappa,1} = -l_1 \left| z_1 \right|^{ c + p_2}  \le  -\eta_1  V_{\kappa,1} ^{{ (c + 1 + \kappa )} / { (c+1 )}},\
$
for some constant $\eta_1>0$.\\
\textit{Step $i$:}
Assume that the conclusion holds true till $i-1$. Define the Lyapunov function $V_{\kappa,i}$ by	$V_{\kappa,i} = V_{\kappa,i-1} + \bar W_i = \sum\limits_{j = 1}^{i } {\bar W_j}$. Then, 
\beqnum \label{lyap_hm}
	\begin{array}{lll}
		\dot V_{\kappa,i} &=& \sum\limits_{j = 1}^{i - 1}  \!\!\!{\frac{{\partial {\bar W_i}}}{{\partial {z_j}}}{z_{j + 1}}}  \!+\! w_i v_i W_i^{\frac{-(i-1)\kappa}{c+p_i}}
		+ {{\dot { V}}_{\kappa, i - 1}}  +  \frac{{\partial { V_{\kappa, i - 1}}}}{{\partial {z_{i - 1}}}}\left( {{z_i}  -  {v_{i - 1}}} \right) ,\\
		 &=&  \sum\limits_{j = 1}^{i - 1} {\frac{{\partial {\bar W_i}}}{{\partial {z_j}}}{z_{j + 1}}} -  l_i |w_i|^{\frac{c+p_{i} + \kappa}{c}} \bar W_i^{\frac{-\kappa(i-1)}{c+p_{1}}}
		 + {{\dot { V}}_{\kappa, i - 1}}  +  \frac{{\partial { V_{\kappa, i - 1}}}}{{\partial {z_{i - 1}}}}\left( {{z_i}  -  {v_{i - 1}}} \right).
\end{array}	\eeqnum	
Using Lemma~\ref{lem1}, one gets firstly that  there exists $k_i>0$ such that for every non zero $(z_1,\cdots,z_i)$, one has $\bar W_i(z_1,\cdots,z_i)/ \left| w_i(z_1,\cdots,z_i) \right|^{{(c + 1)}/{c} }\le k_i $ and secondly
$$
-  l_i |w_i|^{\frac{c+p_{i} + \kappa}{c}} \bar W_i^{\frac{-\kappa(i-1)}{c+p_{1}}}
\leq  - l_i \frac{\bar W_i^{\frac{c+p_{i} + \kappa}{c+p_{1}}}}{k_i^{\frac{c +1 + \kappa}{c+1}}}.
$$
%
%
%
%
%
%
The fact that $\bar W_i$ are homogeneous with respect to $\zeta_\epsilon^p$ of degree $(c+1)$ for each $i=1,\cdots,r$, implies that $V_i$ are homogeneous of degree $(c+1)$ with respect to $\zeta_\epsilon^p$ as well. In addition, according to Proposition $\ref{prop1}$, $\dot V_i$ are homogeneous of degree $(c+1- \kappa )$ with respect to $\zeta_\epsilon^p$. Then without loss of generality, the study can be restricted to the unit sphere $ S_{i,c} $. Set
$$
V^0_i(z_1,\cdots,z_i):= \sum_{j=1}^{i-1}{\frac{{\partial {\bar W_i}}}{{\partial {z_j}}}{z_{j + 1}}}+ \frac{{\partial { V_{\kappa, i - 1}}}}{{\partial {z_{i - 1}}}}\left( {{z_i}  - {v_{i - 1}}} \right),
$$
and define $S^+_i=\{z\in S_{i,c}\ \ V^0_i(z_1,\cdots,z_i)\geq 0\}$. The key point is that
$\min_{z\in S^+_i,\kappa  \in  [-1/r , 0]} \bar W_i^{\frac{c + p_2}{c+p_{1}}}>0$. One can then choose 
$l_i>0$ independent of $\kappa  \in  [-1/r , 0]$ such that, by setting $\eta_i:=l_i/2k_i^{\frac{c + p_i + \kappa}{c+1}}$, we get
$
	\dot V_i  \le  - \sum\limits_{j = 1}^{i} {\eta_j { \bar W}_j^ {\frac{c+1+ \kappa }{c+1}}}.
$ 
At the final step, all parameters $l_i$ are determined, by
$V_{\kappa,r}(z) = \sum\limits_{j = 1}^r{\bar W_j}$ and
$
\dot V _{\kappa,r}(z) \le - \sum\limits_{j = 1}^{i} { \eta_j \bar W_j^ {\frac{c+1+\kappa}{c + 1}}} \le  - \eta \sum\limits_{j = 1}^{i} {  \bar W_j^ {\frac{c+1+\kappa}{c + 1}}},
$
where $\eta: = \min_{1\leq i\leq r}{\eta_i}$. Using Lemma~\ref{theta0}, one gets that

\beq\label{dd}
	\sum\limits_{j = 1}^{i} {  \bar W_j^ {\frac{c+1+\kappa}{c + 1}}} \geq  \left (\sum \limits_{j = 1}^{i}   \bar W_j \right )^ {\frac{c+1+\kappa}{c + 1}}.
\eeq
Finally we get $\dot V_{\kappa,r} \le -\eta V_{\kappa,r}^{{(c+1+ \kappa )} / {(c + 1)}}$.
\end{proof}

\begin{rem}
The controller $\omega^{HM}_r$ is only defined for $\kappa \in [\frac{-1}{r}, 0]$ as one can see from Eq.~\eqref{lyap_hm}.

\end{rem}

\subsection{Stabilization of an $r$-perturbed chain of integrator}
From the controllers $\omega_\kappa^{H}(z)$ and $\omega_\kappa^{HM}(z)$ obtained in Theorem \ref{Levant_theorem} and Theorem \ref{Hong_modif}, we now proceed to the stabilization of the perturbed integrator chain presented in System \eqref{u.l.s.}. The extension of Theorem \ref{Levant_theorem} to the case of System \eqref{u.l.s.} is based on the following result.
\begin{montheo}\label{Harmouche_condition}\noindent Let $\omega(z)$ and $V(z)$ be respectively, a state-feedback control law stabilizing System \eqref{pure_integrators} and a Lyapunov function for the closed-loop system, which satisfy Theorem \ref{cond:bernstein} and obey the following additional conditions: for every $z\in {\hat U}$,

\beqnum\label{condition_theorem2}
 \frac{\partial V }{\partial z_r}(z) \omega (z)  \leq 0\hbox{ and }
\omega (z) = 0  \Rightarrow   \frac{\partial {V} }{\partial {z_r}} (z) = 0.
\eeqnum
Then, for arbitrary constants $P,Q\ge 1$, the following control law stabilizes System \eqref{u.l.s.}:

\beqnum \label{transf}
	u(z)  =P (\omega (z)  + Q \bar\varphi sign(\omega (z) ))/ {\gamma_m}.
\eeqnum
The function $V (z)$ remains a Lyapunov function for the closed-loop system and satisfies Condition \eqref{eq:bernstein}. If ${{\hat U}} = {\mathbb{R}^r}$ and $V(z)$ is radially unbounded, then the closed-loop system is globally stable with respect to the origin.
\end{montheo}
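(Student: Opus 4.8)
The plan is to show that the \emph{same} Lyapunov function $V$ together with the \emph{same} exponent $\alpha$ certifies stability for the perturbed closed loop, by bounding the worst-case time derivative of $V$ along Filippov solutions of \eqref{u.l.s.} with feedback \eqref{transf}. First I would separate the drift from the controlled channel: writing $\partial_r V := \frac{\partial V}{\partial z_r}$ and $L_0 V := \sum_{i=1}^{r-1}\frac{\partial V}{\partial z_i}\, z_{i+1}$, the hypothesis that $V$ is a Lyapunov function for the pure chain \eqref{pure_integrators} under $\omega$ reads $L_0 V + \partial_r V\,\omega(z) \le -C V^\alpha$. Along the perturbed dynamics the derivative becomes $\dot V = L_0 V + \partial_r V\,(\varphi + \gamma\, u)$; substituting $u$ from \eqref{transf} and using the pure-chain inequality to eliminate $L_0 V$ yields
\[
\dot V \le -C V^\alpha + \partial_r V\,\omega\Big(\tfrac{m\gamma}{\gamma_m}-1\Big) + \partial_r V\Big(\varphi + \tfrac{m\gamma}{\gamma_m}\, n\bar\varphi\, sign(\omega)\Big).
\]

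The core of the argument is to show that the two extra terms are nonpositive for every admissible pair $(\varphi,\gamma)$ satisfying (H1). For the first term, $m\ge 1$ and $\gamma\ge\gamma_m$ give $\tfrac{m\gamma}{\gamma_m}-1\ge 0$, while the first additional hypothesis $\partial_r V\,\omega\le 0$ makes the product nonpositive. For the second term I would exploit the sign structure forced by the hypotheses: since $\partial_r V\,\omega\le 0$, whenever $\partial_r V\neq 0$ one has $\omega\neq 0$ and $\partial_r V\, sign(\omega)=-|\partial_r V|$, while when $\partial_r V=0$ the whole term vanishes. Combining this with $|\varphi|\le\bar\varphi$ gives
\[
\partial_r V\Big(\varphi + \tfrac{m\gamma}{\gamma_m}\, n\bar\varphi\, sign(\omega)\Big) \le |\partial_r V|\,\bar\varphi\Big(1 - \tfrac{m\gamma}{\gamma_m}\, n\Big) \le 0,
\]
the last inequality holding because $m,n\ge 1$ and $\gamma\ge\gamma_m$ force $\tfrac{m\gamma}{\gamma_m}\, n\ge 1$. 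Hence $\dot V \le -C V^\alpha$ on $\hat U$, which is precisely Condition \eqref{eq:bernstein} with the same $C$ and $\alpha$ as in the pure case.

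I would then close by invoking Theorem \ref{cond:bernstein}: the reproduced inequality makes $V$ a Lyapunov function for the perturbed closed loop, so every trajectory staying in $\hat U$ converges to $z=0$, with the convergence type (finite-time for $0\le\alpha<1$, and so on) inherited verbatim from the value of $\alpha$; if $\hat U=\mathbb{R}^r$ and $V$ is radially unbounded, global stability follows. The subtlety I expect to be the main obstacle is the discontinuity of $sign(\omega)$, which forces the Filippov interpretation of \eqref{u.l.s.}: on the set $\{\omega=0\}$ the factor $sign(\omega)$ is set-valued, but the second hypothesis $\omega=0\Rightarrow\partial_r V=0$ guarantees that $\partial_r V\, sign(\omega)$ collapses to $0$ for every selection in $[-1,1]$. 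Thus the estimate above holds for the entire set-valued right-hand side, and $V$ decays along all Filippov solutions, completing the argument.
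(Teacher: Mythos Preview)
Your argument is correct and is exactly the natural Lyapunov computation one expects: split $\dot V$ into the drift part and the controlled channel, use the pure-chain inequality to eliminate $L_0V$, and then show the two residual terms are nonpositive via the sign hypothesis $\partial_r V\,\omega\le 0$, its contrapositive $\partial_r V\neq 0\Rightarrow\omega\neq 0$, and the bounds $(H1)$ together with $m,n\ge 1$. The paper itself does not spell this out; it simply notes that the result generalizes Theorem~2 of \cite{Harmouche_CDC12} and ``can be proven in the same way,'' so your write-up is in fact more detailed than what appears here, and your handling of the Filippov set-valued issue at $\{\omega=0\}$ via the implication $\omega=0\Rightarrow\partial_r V=0$ is precisely the point of that second hypothesis.
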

\begin{proof}
This theorem is a generalization of Theorem 2 of \cite{Harmouche_CDC12}, where it has been proven for $P=Q=1$, and is established in the same way.
\end{proof}

\begin{rem}
The controllers $\omega_\kappa^{H}(z)$ and $\omega_\kappa^{HM}(z)$ satisfy the geometric condition \eqref{condition_theorem2} imposed in Theorem \ref{Harmouche_condition}. Indeed, one gets for $z\in \mathbb{R}^r$, 
\beq
\frac{\partial V_{\kappa,r}^H}{\partial z_r} \ \omega_\kappa^H = -l_r\vert z_i ^{\beta _{i - 1}} - v_{i - 1}^{\beta _{i - 1}}\vert^{2(1+\kappa)/p_i\beta_{i-1}},\quad
\frac{\partial V_{\kappa,r}^{HM}}{\partial z_r} \ \omega_\kappa^{HM} =-l_r\vert  z_j^{c/p_j} -v_{i-1}^{c/p_j} \vert^{1+(p_i+\kappa)/c}.
%
%
%
%
%
%
%
%
%
\eeq
%
%
\end{rem}

\begin{rem}
The controller $u(z)$ presented in Equation \eqref{transf} is clearly discontinuous. However, its absolute value $\left| u(z) \right|$ is equal to 
$P( \left| \omega (z) \right| + Q\bar\varphi)\gamma_m$. Then $\lim_{\|z\| \to 0} \left| u(z) \right|$ takes its minimal value at the origin if $\omega (z)$ vanishes there. In particular when $P=Q=1$, it has been claimed  in Section 2 of \cite{Levant2005} that in order to stabilize the uncertain System \eqref{u.l.s.} by a state-feedback controller $u = u(z)$, it is necessary that the controller be discontinuous at $z=0$ and 
$
	\lim_{\|z\| \to 0} \left |u(z) \right| \ge {\bar \varphi} / {\gamma_m}=:M_{min}.
$
\end{rem}


\section{Discussion of Special Cases}
Let us now consider some results that arise for some specific choices of the homogeneity degree. First,  a bounded controller with minimum amplitude $M_{min}$ of discontinuous control at $z=0$ is designed. Finally, a controller with fixed-time convergence is synthesized.%
%
%
%

%
%
\subsection{Homogeneous controller with minimum amplitude of discontinuous control at $z=0$}
We first notice that, for $\kappa =-1/r$, if  $\omega(z)$ denotes one of the controllers presented in Theorem \ref{Levant_theorem} or Theorem \ref{Hong_modif}, then $\omega(z)$ is bounded and the corresponding controller $u$ defined as
\beqnum\label{toto0}
	 u = \frac{1}{\gamma_m}\left( \omega(z) + \bar \varphi sign(\omega(z)) \right) \equiv \frac{l_r + \bar \varphi}{\gamma_m} \text{sign}(\omega(z)),
\eeqnum
stabilizes System \eqref{u.l.s.} in finite time. Moreover, the above controller  is identical to that of Levant \cite{Levant2001} for $1\leq r\leq 2$. The advantage of our controller in these cases is that the Lyapunov function provides an analytical method of tuning the controller parameters, whereas the tuning is empiric in Levant's case. Unfortunately this is not the case as soon as $r\geq 3$ and one needs the delicate analysis developed in  \cite{Levant2001}.

The amplitude of the discontinuous control given in Eq.~\eqref{toto0} is equal to $M = \left( l_r + \bar \varphi \right) / {\gamma_m}.$ We shall now see that this amplitude can be reduced to its minimum level $M_{min} = \bar \varphi / {\gamma_m}$
when the state $z$ tends to zero, by changing the degree of homogeneity. 

In the following theorem, both $\omega_\kappa(z)$ and $\bar \omega_\kappa(z)$, $\kappa\leq 0$ represent either $\omega_\kappa^{H}$ and $\omega_\kappa^{HM}$
and  the Lyapunov function $\bar V_{\kappa,r}$corresponds to $\omega_\kappa^{HM}$.
\begin{montheo}\label{prop2}
For $k \in (-1/r,0)$ and $A>0$ satisfying
\beqnum\label{Condition_Saturation}
	\mathop {\max }\limits_{\mathop { \bar V_{k,r}(z) \le A} } \left| \bar \omega_k(z) \right| \le l_r,

\eeqnum
we define the function
$
	U_{k,A}(z) \!\!:=\!\! \left\{ \begin{array}{ccc}
									\omega_{-1/r}(z) &\text{if}& \bar V_{k,r}(z) > A,\\
									\bar \omega_k(z) &\text{if}& \bar V_{k,r}(z) \le A.
					\end{array}  \right.\\
$
Then the controller $u(z):=\left(U_{k,A}(z) + \bar \varphi sign(U_{k,A}(z))\right) / {\gamma_m}$ stabilizes System \eqref{u.l.s.} in finite time, and $u(z)$ is bounded with minimum amplitude of discontinuity $M_{min}$ at $z=0$.
\end{montheo}
\begin{proof}
Consider the following sets
$$
	\mathbf{S}_1= \{z \in \mathbb{R}^r: \left| \bar \omega_k(z) \right| \le l_r  \},\quad
	\mathbf{S}_2 = \{z \in \mathbb{R}^r:  \bar V_{k,r}(z) \le A  \}.
$$
According to Condition \eqref{Condition_Saturation}, we have $\mathbf{S}_2 \subset \mathbf{S}_1$.
As $\dot V_{-1/r,r}(z) < 0, \forall z \notin \mathbf{S}_2$, then every trajectory of System \eqref{u.l.s.} reaches $\mathbf{S}_2$ in finite-time.
Moreover, for $z\in \mathbf{S}_2$, $U_{k,A}(z)$ is equal to $\bar \omega_k(z)$, with $\left| \bar \omega_k(z) \right|\le l_r$. Therefore, as soon as a trajectory reaches $\mathbf{S}_2$, it will stay in it forever since $\dot {\bar V}_{k,r}(z)< 0, \forall z \notin \mathbf{S}_1,\forall z\ne 0$. One concludes that every trajectory of System \eqref{u.l.s.} converges to zero in finite-time and $ U_{k,A}(z) $ tends to zero as $z$ tends to zero. As a result, $ \forall z \in \mathbb{R}^r$, $\left |u(z) \right|  \le  M_{min} + {l_r} / {\gamma_m}.$ and 
$\lim_{\|z\| \to 0} \left |u(z) \right|  =  {\bar \varphi} / {\gamma_m} = M_{min}$.
\end{proof}
\subsection{Fixed-time Homogeneous controller}
In certain cases, it is required that the controller converges within a fixed interval of time, irrespectively of its initial condition. This can also be achieved by changing the homogeneity degree.\\
In the following theorem, $\bar \omega_\kappa(z)$, $\kappa\leq 0$ represents either $\omega_\kappa^{H}$ and $\omega_\kappa^{HM}$ and  $\bar V_{\kappa,r}$is  the corresponding Lyapunov function.
%
%
\begin{montheo}
For $k_1 \in (0,+1/r)$, $k_2 \in (-1/r,0)$ and $B>0$, define

\beqnum\label{eq:a}
	E:=  \mathop {\min }\limits_{V_{k_2,r}(z) = B}  V_{k_1,r}(z) > 0,
	
\eeqnum
and the function
$
	U_{k,B}(z) = \left\{ \begin{array}{ccc}
									\omega_{k_1}^H(z) &\text{if}& V_{k_2,r}(z) > B,\\
									\bar\omega_{k_2}(z) &\text{if}& V_{k_2,r}(z) \le B.
					\end{array}  \right.\\
$
Then the controller $u(z):=\left(U_{k,B}(z) + \bar \varphi sign(U_{k,B}(z)) \right) / {\gamma_m}$ stabilizes System \eqref{u.l.s.} in fixed time $T\leq T_u + T_f$ where the values of $T_u$ and $T_f$ are given by
$$
	T_u = {(2 + k_1)E^{\frac{k_1}{2+k_1 }}} / \left({ k_1 C }\right), \quad
	T_f = \left\{
						\begin{array}{lcl}
									{(2+k_2)B^{\frac{-k_2}{2+k_2}}} / \left({ -k_2 C }\right), \text{ if } \bar\omega_{k_2}(z) = \omega_\kappa^{H}(z).\\
									{(c+1)B^{\frac{-k_2}{c+1}}} / \left({ -k_2 C }\right), \text{ if } \bar\omega_{k_2}(z) = \omega_\kappa^{HM}(z).\\									
						\end{array}
				\right.
$$
\end{montheo}

\begin{proof} The conclusion follows by integrating the differential equation $\dot V = -C V^\alpha$ on appropriate time intervals.
Consider first the following sets
$$	\mathbf{S}_1 = \{z \in \mathbb{R}^r: V_{k_1,r}( z) \le E  \},\quad
	\mathbf{S}_2 = \{z \in \mathbb{R}^r: \bar V_{k_2,r}( z) \le B  \}.
$$
According to Condition \eqref{eq:a}, we get that $\mathbf{S}_1 \subset \mathbf{S}_2$.
Clearly, $z$ will reach $\mathbf{S}_2$ in a fixed-time, bounded by a constant $T_u$, calculated as follows:
for $\alpha = 1 + \frac{k_1}{2+k_1}, \quad \int^{+\infty}_{E} {\frac{dV}{V^{\alpha}}} = -C \int^{T_u}_{0} {dt}$, then 	$T_u = {(2 + k_1)E^{\frac{k_1}{2+k_1 }}} / \left({ k_1 C }\right)$.
When $z$ reaches $\mathbf{S}_2$, i.e. $\bar V_{k_2,r} (z) = B$, $z$ will converge to zero in a finite-time bounded by $T_f$, which is calculated as follows:
for $\alpha = 1 + \frac{k_2}{2+k_2}, \quad \int^{0}_{B} {\frac{dV}{V^{\alpha}}} = -C \int^{T=T_u+T_f}_{T_u} {dt}$, then $T_f = {(2+k_2)B^{\frac{-k_2}{2+k_2}}} / \left({ -k_2C }\right)$. Finally, for $\alpha = 1 + \frac{k_2}{c+1}, \quad \int^{0}_{B} {\frac{dV}{V^{\alpha}}} = -C \int^{T=T_u+T_f}_{T_u} {dt}$, then $T_f = {(c+1)B^{\frac{-k_2}{c+1}}} / \left({ -k_2C }\right)$
\end{proof}
\begin{rem}
The rate of convergence can be accelerated via time-rescaling (see Theorem 2 of Hong et al. \cite{Hong2005}). This is done by replacing the controller $\omega(z_1,z_2, \cdots, z_r)$ by $\bar \omega(z_1,z_2, \cdots, z_r) = \tau^r \omega(z_1,\frac{z_2}{\tau}, \cdots, \frac{z_r}{\tau^{r-1}}) $ where $\tau>1$, and taking  $u$ as $u = \frac{m}{\gamma_m}( \bar \omega + n \bar \varphi sign(\bar \omega))$. By taking $\bar t = \tau t$ and $\bar z_i = \tau^{1-i} z_i$, we obtain $\dot V(\bar z_1,\cdots,\bar z_r) \le -\tau C V(\bar z_1,\cdots,\bar z_r)$
and the settling time becomes $\bar T\leq (T_u + T_f)/\tau$.

\end{rem}

\section{Simulation Results}
In this section, we illustrate the performance of our proposed controllers using the following perturbed triple integrator defined by:
$
		\begin{array}{ccc}
			\dot z_1 = z_2,\quad
			\dot z_2 = z_3,\quad
			\dot z_3 = \varphi + \gamma u,
		\end{array}
$
with $\varphi = \sin(t)$ and $\gamma = 3 + \cos(t)$. Then, we have
$
	\gamma_m = 2,\quad \gamma_M = 4, \quad \bar \varphi = 1.
$\\
The parameters of the controller are chosen as follows:
$
	l_1 = 1,\quad l_2 = 3,\quad l_3 = 10.\\
$
We start first by fixing the parameter $\kappa$ for different values $\{{1}/{4}, -{1}/{4}, -{1}/{3} \}$.\\
For $\kappa>0$, Figure \ref{fig_u} shows a fast convergence of the states to a neighborhood of zero by an unbounded controller, otherwise the convergence to zero is asymptotic. For $-1/3<\kappa<0$, the convergence of the states to zero in finite-time is obtained by an unbounded controller with a minimum amplitude of the discontinuous control at $z=0$, as shown in Figure \ref{fig_f1}. The finite-time convergence of the states is also shown in Figure \ref{fig_f2} for $\kappa=-1/3$, using a bounded controller with a large discontinuous control at $z=0$.\\
The performance of a bounded controller which ensures a minimum discontinuous control amplitude at zero is shown in Figure \ref{fig_b} by switching $\kappa$ in neighborhood of zero, from $-1/3$ to $-1/4$.\\
The performance of a fixed-time controller is shown in Figure \ref{fig_x}. Figure \ref{time_x} shows that the convergence time will not exceed $8.5$ sec for any initial condition. Fixed-time stability is assumed to be established by the time after which, $|z_1|,\ |z_2|, \ |z_3|$ are less than $ 1 \times 10^{-4}$.

\begin{figure}[htbp]
\centering
\subfigure[control law $u$ versus time ($s$).]{	
    \includegraphics[width= 7.5 cm, height = 2.9 cm]{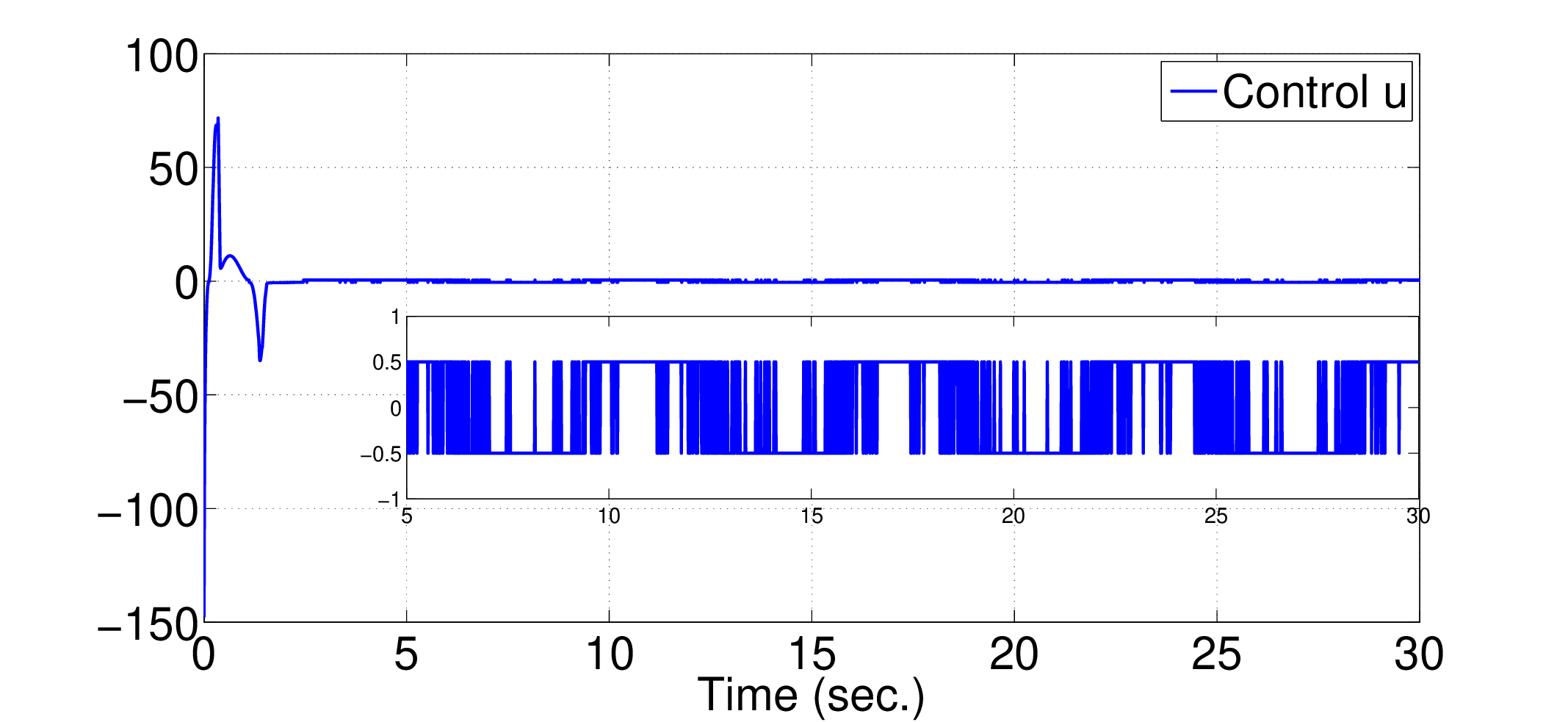}
    \label{u_u}
}
\subfigure[$z_1$ and $z_2$ versus time ($s$).]{	
    \includegraphics[width= 7.5 cm, height = 2.9 cm]{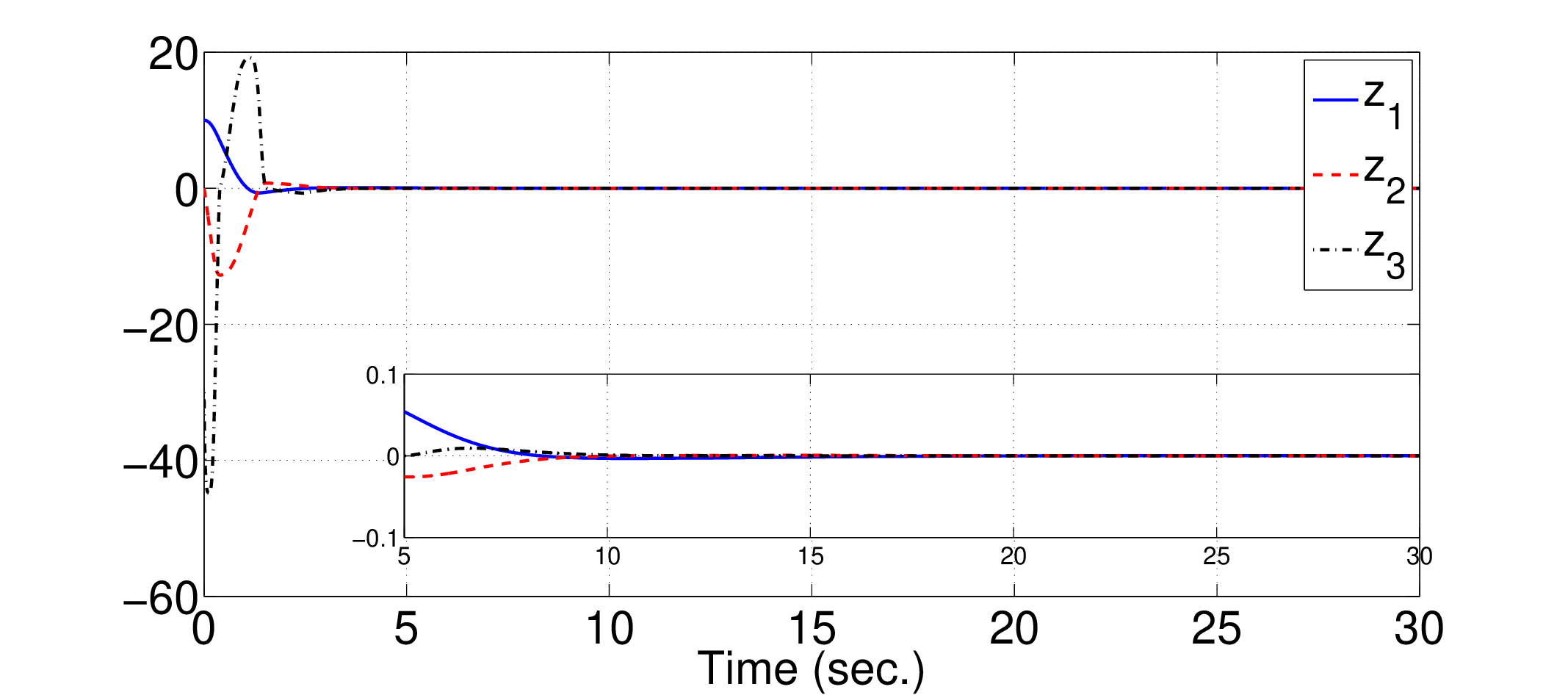}
    \label{states_u}
}
\caption{test for $\kappa>0$}
\label{fig_u}
\centering
\subfigure[control law $u$ versus time ($s$).]{
    \includegraphics[width= 7.5 cm, height = 2.9 cm]{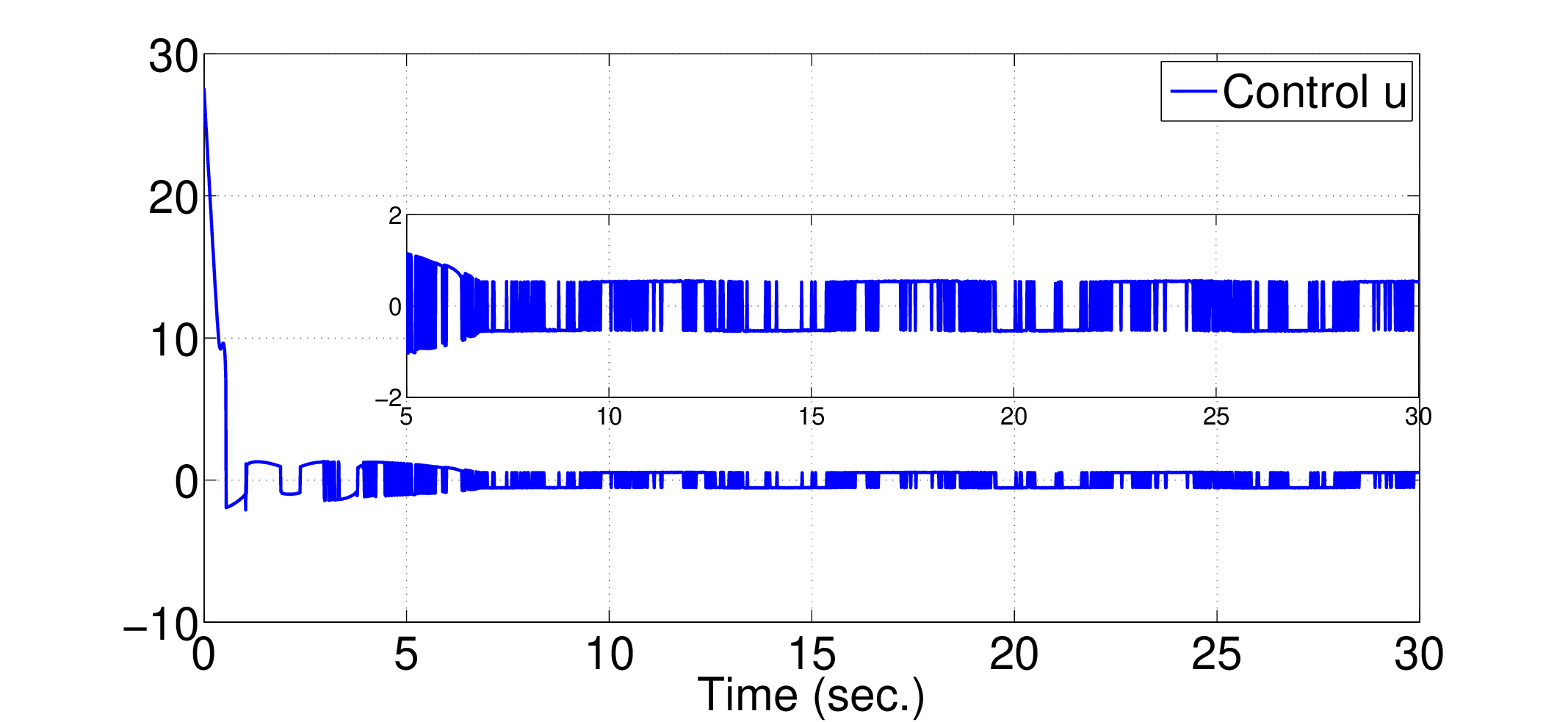}
    \label{u_f1}
}
\subfigure[$z_1$ and $z_2$ versus time ($s$).]{
    \includegraphics[width= 7.5 cm, height = 2.9 cm]{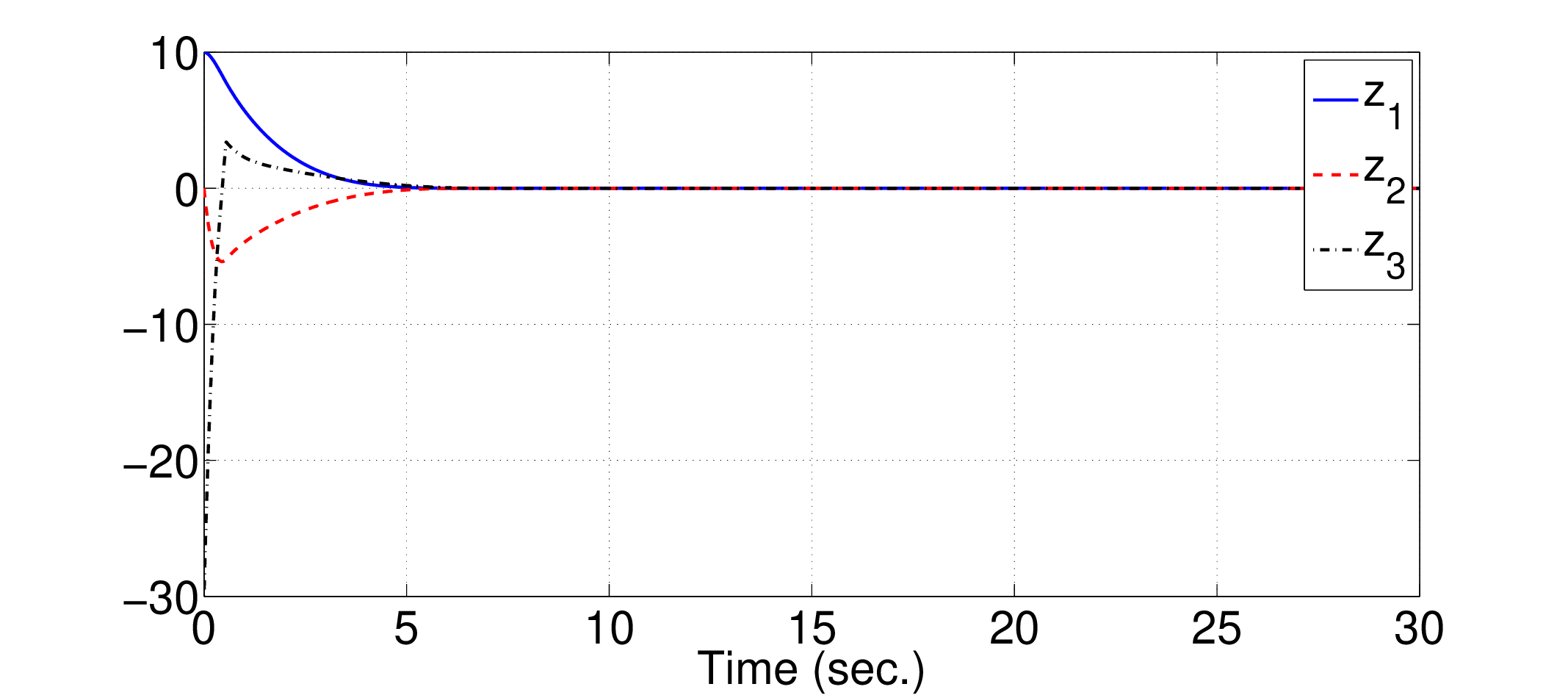}
    \label{states_f1}
}
\caption{test for $-1/r<\kappa<0$}
\label{fig_f1}

\centering
\subfigure[control law $u$ versus time ($s$).]{
    \includegraphics[width= 7.5 cm, height = 2.9 cm]{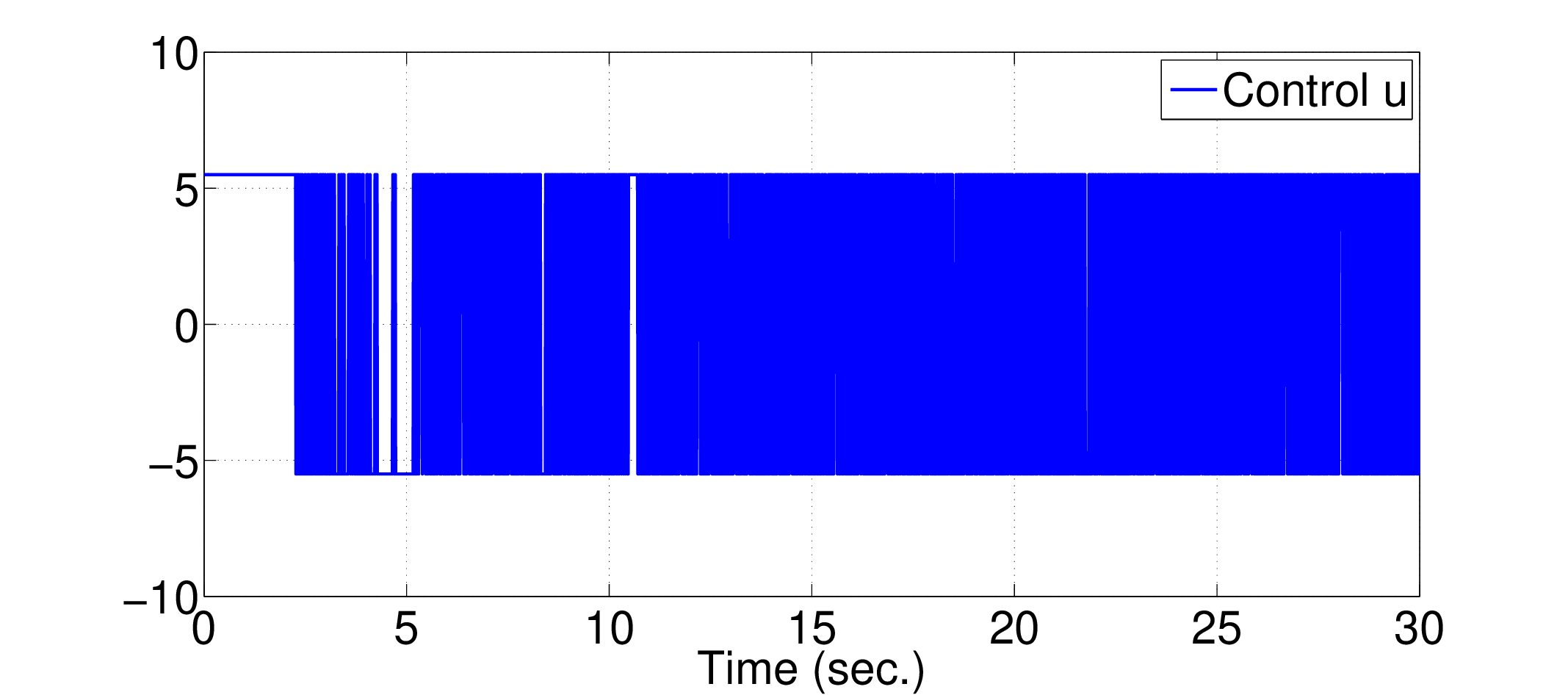}
    \label{u_f2}
}
\subfigure[$z_1$ and $z_2$ versus time ($s$).]{
    \includegraphics[width= 7.5 cm, height = 2.9 cm]{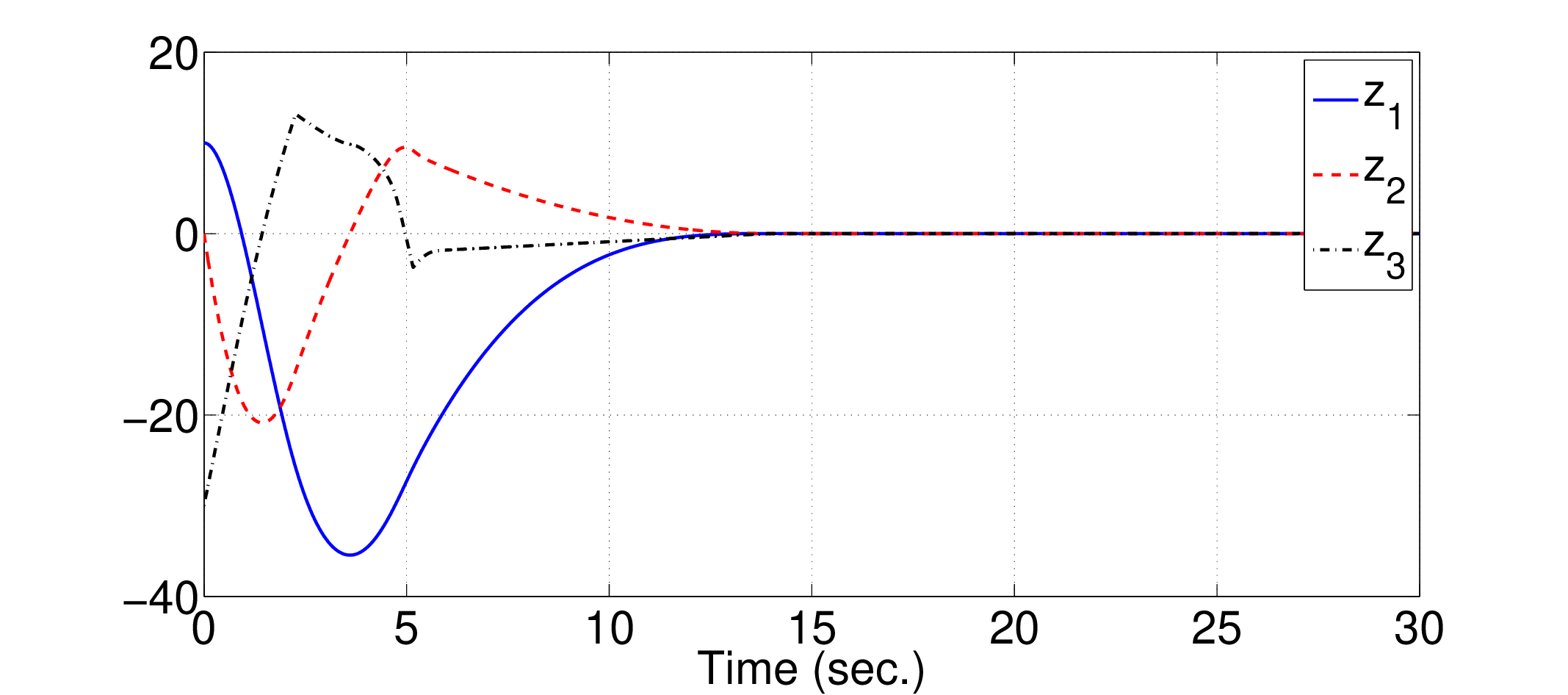}
    \label{states_f2}
}
\caption{test for $\kappa = -1/r$ (case equivalent of \cite{Levant2001})}
\label{fig_f2}
%
%
\centering
\subfigure[control law $u$ versus time ($s$).]{
    \includegraphics[width= 7.5 cm, height = 2.9 cm]{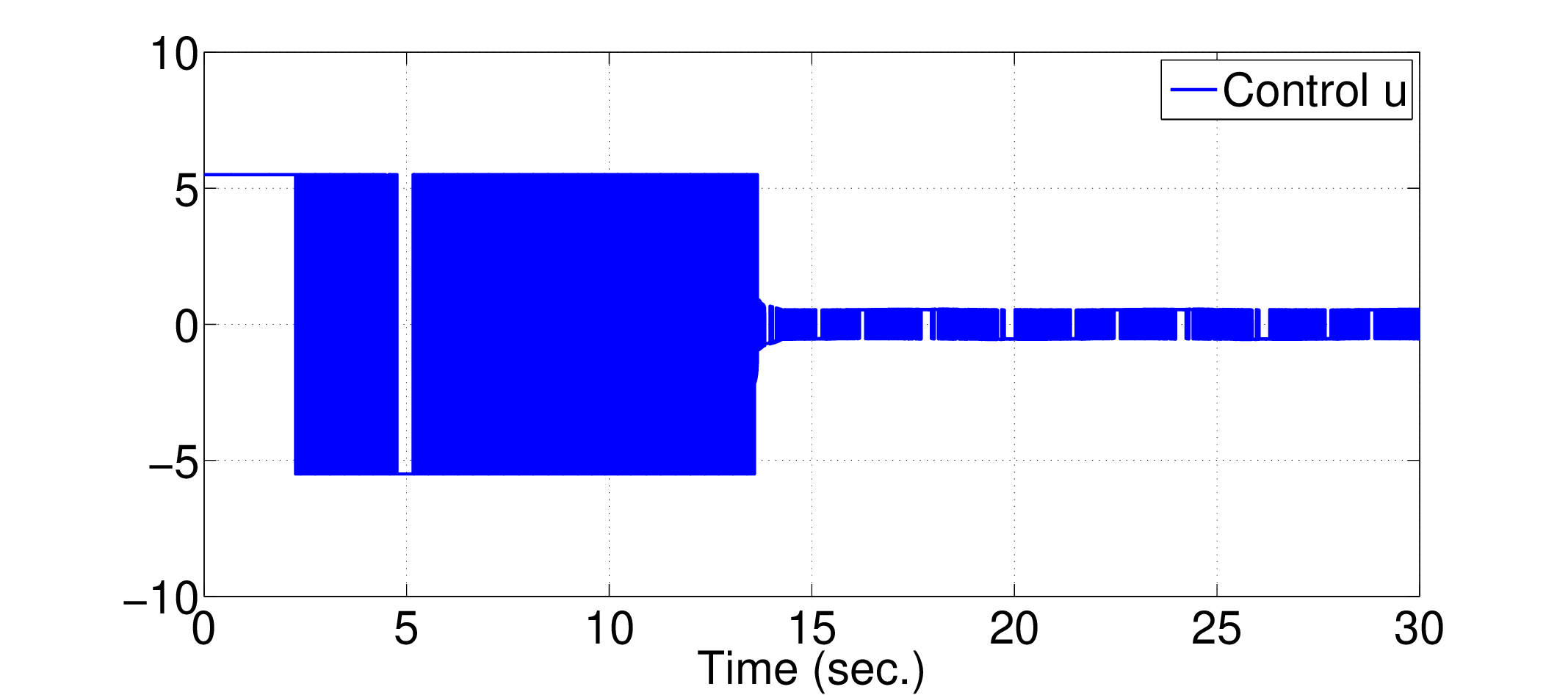}
    \label{u_b}
}
\subfigure[$z_1$ and $z_2$ versus time ($s$).]{
    \includegraphics[width= 7.5 cm, height = 2.9 cm]{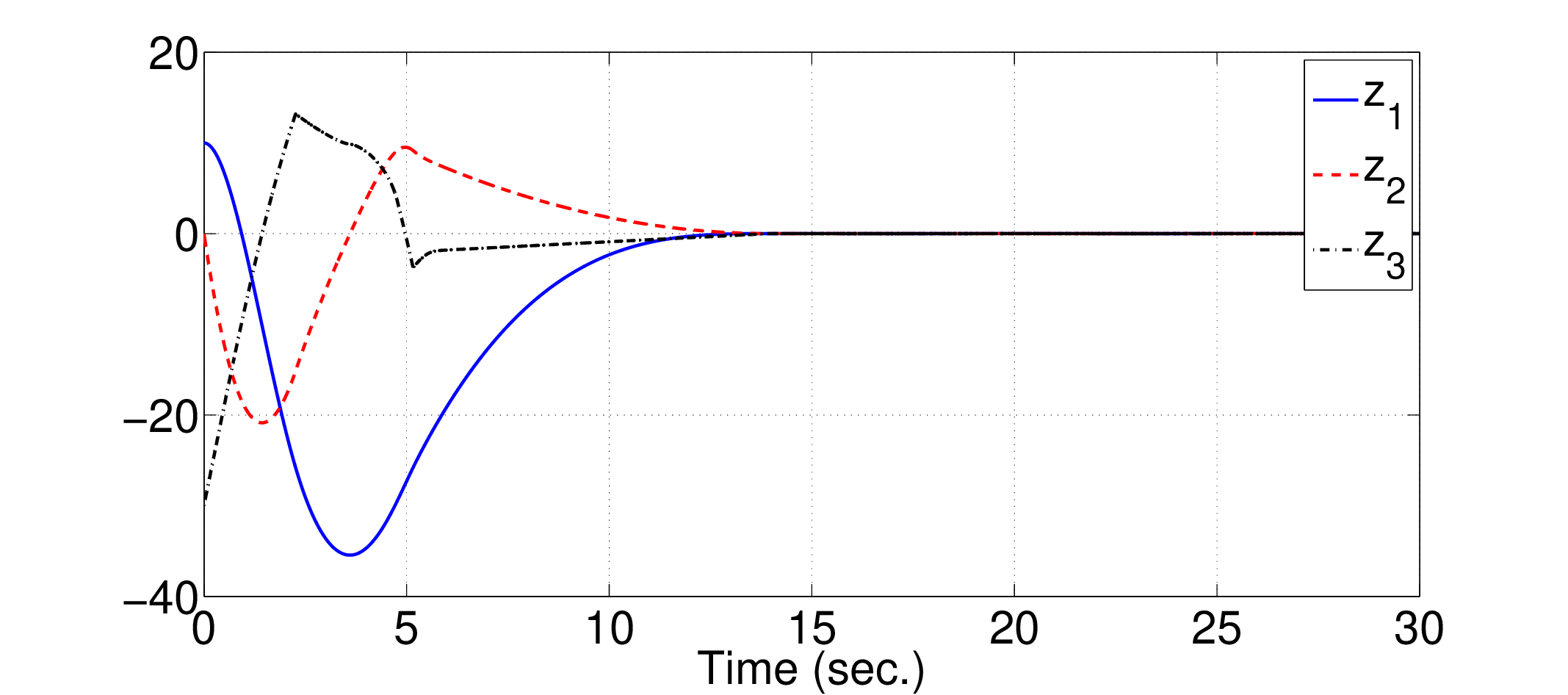}
    \label{states_b}
}
\caption{test for $\kappa$ switching from $ -1/r$ to $ k \in (-1/r,0)$}
\label{fig_b}
\centering
\subfigure[control law $u$ versus time ($s$).]{
    \includegraphics[width= 7.5 cm, height = 2.9 cm]{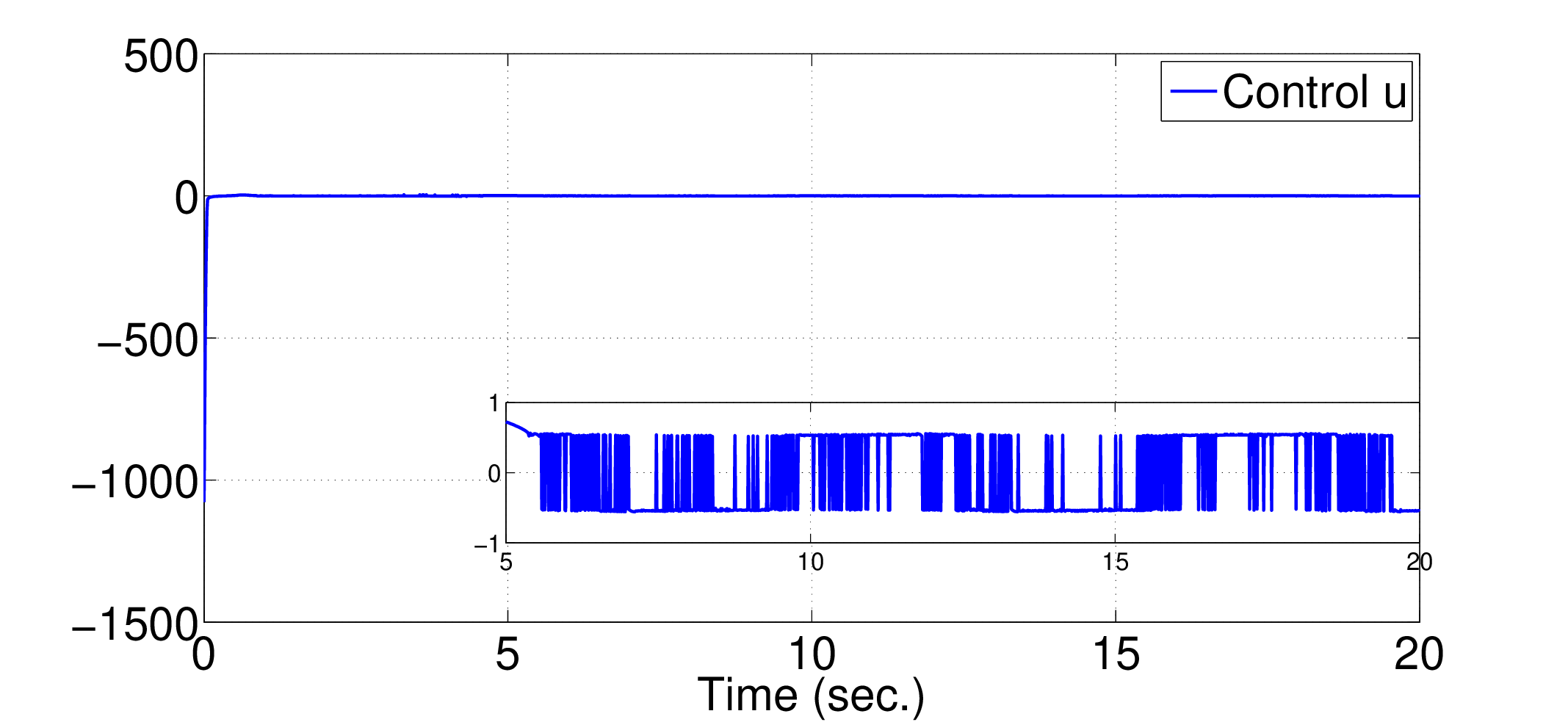}
    \label{u_x}
}
\subfigure[$z_1$ and $z_2$ versus time ($s$).]{
    \includegraphics[width= 7.5 cm, height = 2.9 cm]{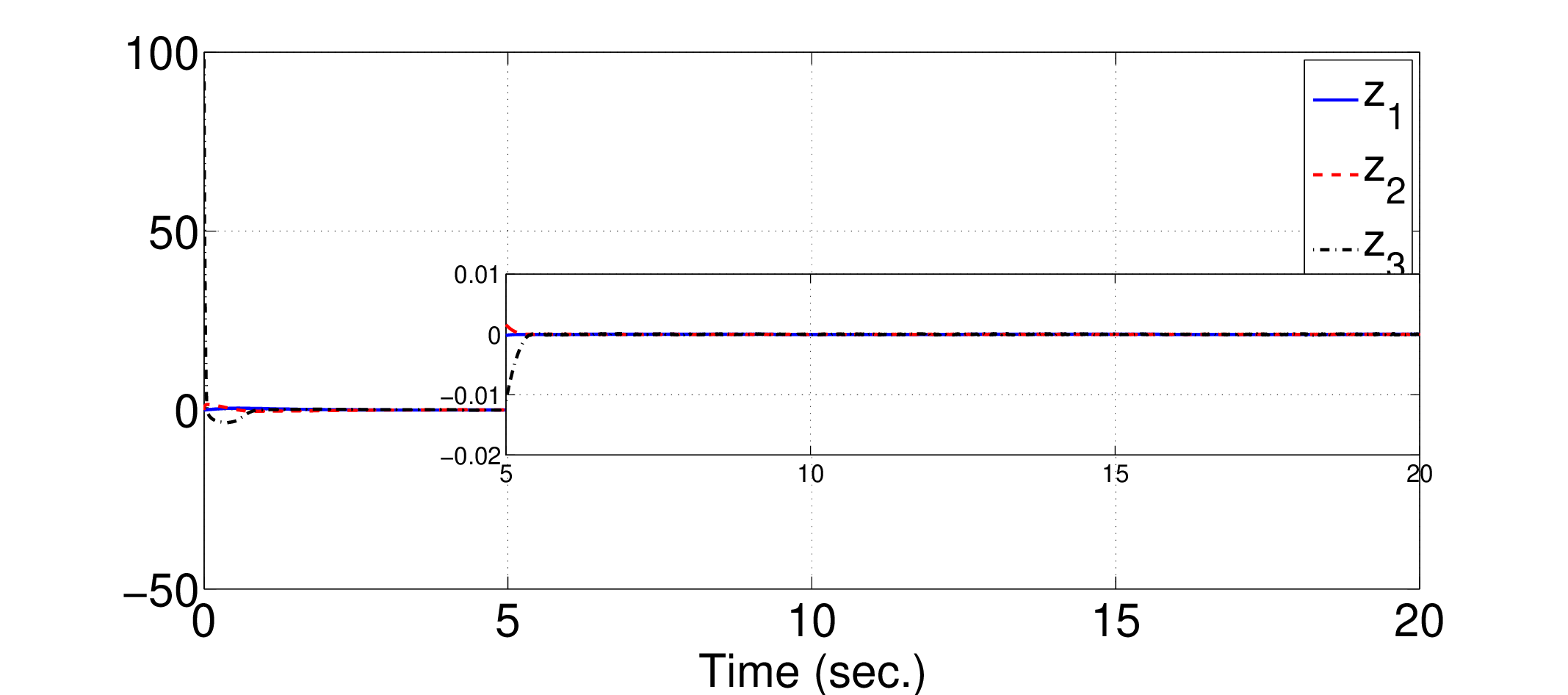}
    \label{states_x}
}
\caption{test for $\kappa$ switching from $-k$ to $k$, $k \in (-1/r,0)$ }
\label{fig_x}

\end{figure}

\begin{figure}[htbp]
\begin{center}
\includegraphics[width=10cm]{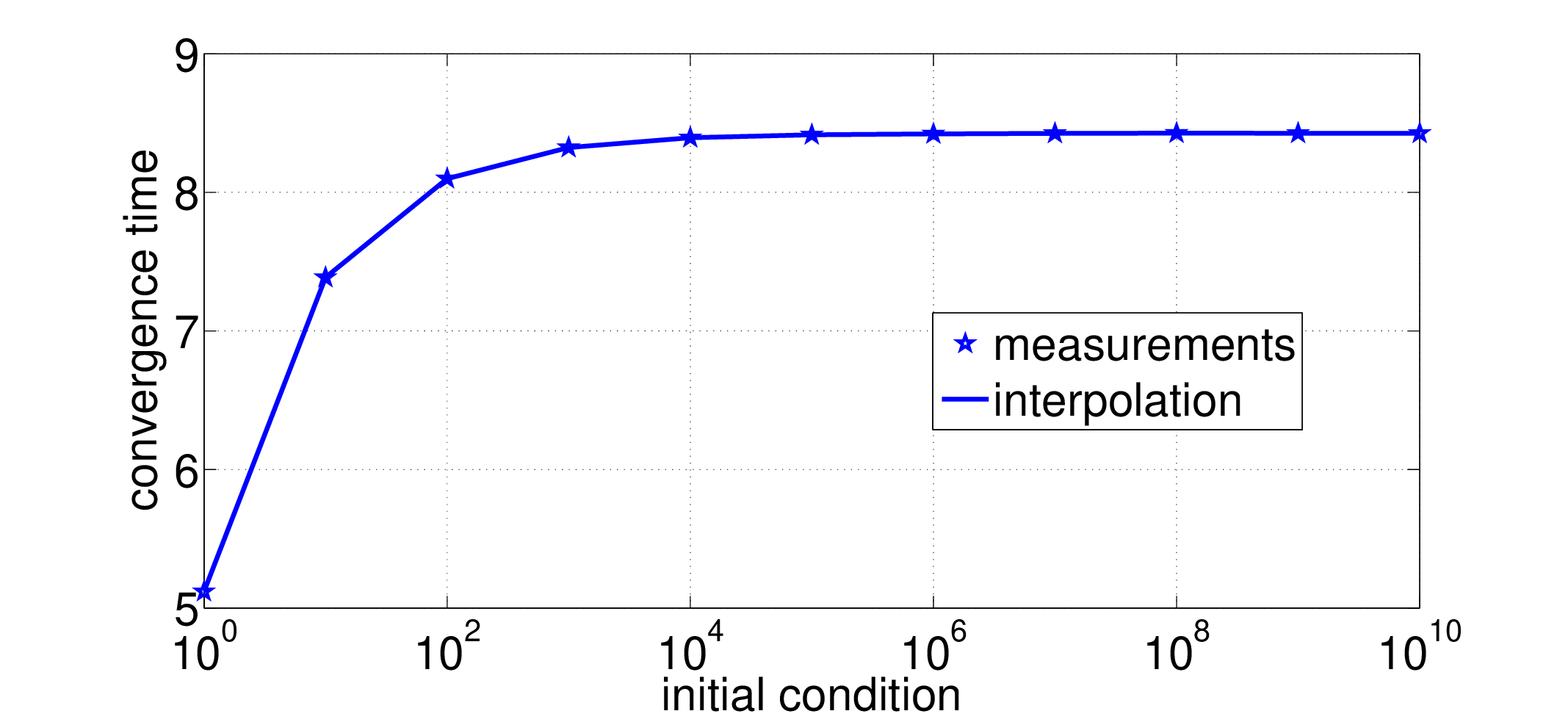}
\caption{Convergence time versus initial condition.}
\label{time_x}
\end{center}
\end{figure}
%
\section{Conclusions}
In this paper, we presented a Lyapunov-based method for designing finite-time convergent controllers for stabilization of perturbed integrator chains of arbitrary order. This method consists in appropriate modifications of  homogeneous controller stabilizing pure integrator chains. It was also shown that the properties of minimum discontinuity amplitude of the controller and fixed-time convergence can be obtained by changing the homogeneity degree of the controller. 
\bibliographystyle{IEEEtran}
\bibliography{Ref}
\end{document}